\newtheorem{theorem}{Theorem}[section]
\theoremstyle{plain}
\newtheorem{corollary}[theorem]{Corollary}
\newtheorem{prop}[theorem]{Proposition}
\newtheorem{remark}[theorem]{Remark}
\numberwithin{equation}{section}
\def\nn{\nonumber}
\def\ds{\displaystyle}
\def\tou{T}     
\def\tpq#1{\tou_#1(x;p,q)}
\def\nqexp{\mathrm{exp}_q}
\def\nexp#1{\mathrm{exp}_#1}
\def\Stir{S}
\def\stir{s}
\def\bell{B}          
\def\hpqStir#1#2{S(#1,#2;u,v)}
\def\nsb{\mathrm{nsb}}
\def\nse{\mathrm{nse}}
\def\rlm{\mathrm{rlm}}
\def\NSE{\mathrm{NSE}}
\def\RLM{\mathrm{RLM}}
\def\Spq{\Stir_{p,q}}
\def\slp{\mathrm{SLP}}
\def\aslp#1{|\slp_{#1}|}
\def\sone#1#2{\Big[\begin{matrix}#1 \\ #2 \end{matrix}\Big]}
\def\stwo#1#2{\Big\{\begin{matrix}#1 \\ #2 \end{matrix}\Big\}}
\def\sonet#1#2{\Big[\begin{matrix}#1 \\ #2 \end{matrix}\Big]}
\def\stwot#1#2{\Big\{\begin{matrix}#1 \\ #2 \end{matrix}\Big\}}
\def\SSP{\text{SSP}}
\def\LSP{\text{LSP}}
\def\SLP{\text{SLP}}
\def\LLP{\text{LLP}}
\begin{document}
\title{Study of the $p,q$-deformed Touchard polynomials}\author[O.~Herscovici]{Orli Herscovici}
\address{O.~Herscovici\\School of Mathematics,
Georgia Institute of Technology, Atlanta, 
GA 30332}
\email{orli.herscovici@gmail.com}

\begin{abstract}
A two-parameter deformation of the Touchard polynomials,
based on the NEXT $q$-exponential function
of Tsallis, defines two statistics on set partitions.  The generating function of classical Touchard polynomials is a composition of two exponential functions. By applying analysis of a combinatorial structure of the deformed exponential function, we establish explicit formulae for both statistics. Moreover, the explicit formulae for the deformed Touchard polynomials makes possible to evaluate coefficients of Taylor series expansion for wide variety of functions with different values of parameters $p$ and $q$.
\end{abstract}
\maketitle

\noindent{\sc Keywords:} Touchard polynomials; Stirling numbers;
degenerate exponential function; statistics on set partitions; permutations; right-to-left minimum; Bessel polynomials; reverse Bessel polynomials

\noindent{\sc 2010 MSC:}
05A10; 05A15; 05A18; 05A30; 05E05; 11B73; 11B83;
\maketitle
\section{Introduction}
The  $p,q$-deformed Touchard polynomials $\tpq{n}$ for $n\in\mathbb{N}$ were defined in \cite{Herscovici2017} by means of the generating function
\begin{align}
\nexp{p}(x(\nexp{q}(t)-1))=\sum_{n=0}^\infty \tpq{n}
\frac{t^n}{n!}, \label{Eq1-tpq}
\end{align}
where $\nqexp(t)$ is the NEXT  $q$-exponential function given for $q\neq 1$ as follows
(see \cite{Tsallis1994})
\begin{align}
\nqexp(t)=(1+(1-q)t)^\frac{1}{1-q}. \label{DefNextqExp}
\end{align}
We extend this definition by assuming that for $q=1$ this deformed exponential function equals its limiting value $\lim_{q\rightarrow1}\nqexp(t)=e^t$.

In \cite{Herscovici2017} it was shown that $\tpq{n}$ is indeed an $n$th degree polynomial, and its
 coefficients were denoted  by $\Spq(n,k)$; that is
\begin{align}
\tpq{n}=\sum_{k=0}^n\Spq(n,k)x^k.\label{TpqSpq}
\end{align}

In the case that $p=q=1$, where one has
$$
e^{x(e^t-1)}=\sum_{n=0}^\infty T_n(x;1,1)\frac{t^n}{n!},
$$
the polynomials $T_n(x;1,1)$ are the classical Touchard polynomials, and the coefficients
$S_{1,1}(n,k)$ are the Stirling numbers of the second kind $\stwot{n}{k}$.
Recall that the Stirling numbers of the second kind  count the
number of partitions of $[n]$ into exactly $k$ disjoint non-empty sets. We can state this alternatively as follows: $S_{1,1}(n,k)=\stwot{n}{k}$ counts
the number
 of unordered partitions of $[n]$ into exactly $k$ unordered blocks. In the language of \cite{Motzkin1971},
 such  objects are called ``sets of sets.'' Thus, letting $\SSP_{n,k}$ denote the collection of all
 sets of sets of $[n]$ into $k$ blocks, we have $|\SSP_{n,k}|=S_{1,1}(n,k)=\stwot{n}{k}$.
 One could also consider ordered partitions of $[n]$ into $k$ non-empty unordered blocks; that is,   the order of the blocks is taken into
 account, but not the order of the elements inside the blocks. Alternatively, one could consider unordered partitions
 of $[n]$ into $k$ non-empty ordered blocks; that is,  the order of the elements in each block is taken into account, but not
 the order of the blocks. And finally, one could consider ordered partitions of $[n]$ into $k$ non-empty ordered blocks; that is,
 both the order of the blocks and the order of the elements within the blocks is taken into account.
 In the language of \cite{Motzkin1971}, these three types of objects are called
 ``lists of sets,'' ``sets of lists,'' and ``lists of lists'' respectively. We will denote these collections by
 $\LSP_{n,k},\SLP_{n,k}$ and $\LLP_{n,k}$ respectively.

The work of \cite{Herscovici2017} revealed an intimate connection between the above objects and the
deformed Touchard polynomials. We now describe that connection.
Consider an element $\pi\in\LLP_{n,k}$. For each of its blocks, define the ``standard opener'' of the block to be the smallest number in that block. Label the blocks by these numbers. In general, if we look at the blocks from left to right,
the labels will not be in increasing order. If the labels are in increasing order, we call the set of blocks a set of ``standard blocks.''
Imagine now that we can switch the order of the blocks so
that their labels will be in increasing order and thus the set of blocks will become a set of standard blocks. However, the
only way we are allowed to switch the positions of blocks is by moving  blocks to the
right. Let $\nsb(\pi)$ (``nonstandard blocks'') be the statistic denoting the number of blocks in $\pi$ that must be moved to the right to
 obtain a set of standard blocks. For example, let $n=8$ and $k=4$. If $\pi=32/681/57/4$, then
 $\nsb(\pi)=2$ because we need to move to the right the block 32 and the block 57, in order to obtain the standard
 set of blocks $618/32/4/57$.
Now consider any particular block of $\pi$. In general, the elements in the block are not in increasing order.
If they are, we call the set of elements of that block ``standard elements.'' Imagine now that we can switch the order of the elements
in the block so that they will be in increasing order and become a set of standard elements. However, the only way
we are allowed to switch the positions of the elements is by moving them to the right. Let $\text{nse}(\pi)$
(``nonstandard elements'') be the statistic denoting the number of elements from all the  blocks of $\pi$ that must be moved to the right in order that
all of its blocks will be sets of standard elements. Thus for $\pi$ given above, we have $\text{nse}(\pi)=3$, because
we need to move to the right the  numbers 3,6 and 8, in order to obtain 23/168/57/4.

From their definitions, it follows that $0\le \nsb(\pi)\le k-1$ and $0\le \nse(\pi)\le n-k$, for all $\pi\in\LLP_{n,k}$.
Define a generating function  $\hpqStir{n}{k}$ for the objects $\{\nsb(\pi),\nse(\pi);\pi\in \LLP_{n,k}\}$ by
\begin{align}
\hpqStir{n}{k}=\sum_{\pi\in \LLP_{n,k}}
u^{\nsb(\pi)}v^{\nse(\pi)}.\label{snkuv}
\end{align}
Here is the connection between $\LLP_{n,k}$ and the deformed Touchard polynomials that was shown
in \cite{Herscovici2017}.
\begin{theorem}(Theorem 3.4 in \cite{Herscovici2017})
For all integers $n,k$ with $1\le k\le n$,  we have
\begin{align}
\Spq(n,k)=\Stir(n,k;p-1,q-1). \label{spqnk}
\end{align}
\end{theorem}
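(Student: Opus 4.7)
The plan is to compute the coefficient of $x^k t^n/n!$ in $\nexp{p}(x(\nexp{q}(t)-1))$ by standard exponential generating function composition, and to match it against $S(n,k;p-1,q-1)$ via a combinatorial bijection. Setting $u=p-1$ and $v=q-1$, the binomial expansion of $(1+(1-q)t)^{1/(1-q)}$ gives $\nexp{q}(t)=\sum_{r\ge 0} c_r(v)\frac{t^r}{r!}$, where
\begin{equation*}
c_r(w):=\prod_{j=0}^{r-1}(1+jw),
\end{equation*}
and the same formula gives $\nexp{p}(y)=\sum_{k\ge 0} c_k(u)\frac{y^k}{k!}$. The theorem then reduces to the key combinatorial identity
\begin{equation*}
c_r(w)=\sum_{\sigma\in S_r} w^{\nse(\sigma)},
\end{equation*}
where $\nse(\sigma)$ is the minimum number of right-moves required to sort $\sigma$ viewed as a linear ordering of $\{1,\dots,r\}$.

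I would establish this identity by first proving $\nse(\sigma)=r-\rlm(\sigma)$, where $\rlm(\sigma)$ counts right-to-left minima. The lower bound $\nse(\sigma)\ge r-\rlm(\sigma)$ is immediate: if $x$ is not a right-to-left minimum, some $w<x$ lies to the right of $x$ in $\sigma$, and since the sorted arrangement has $w$ before $x$ but moves are only rightward, $x$ must itself be moved past $w$ at some point; hence each of the $r-\rlm(\sigma)$ non-RL-minima contributes at least one move. The matching upper bound is delivered by the algorithm that processes non-RL-minima in decreasing order of value, moving each one directly to its sorted position (equal to its value); the crucial verification is that when it is time to process $y$ the element $y$ has current position strictly less than $y$ (otherwise values $y,y+1,\dots,r$ would all occupy positions $\ge y$ originally, forcing $y$ to have been a right-to-left minimum of $\sigma$), and that subsequent moves of smaller values cannot shift the already-placed $y$ out of position $y$. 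With $\nse=r-\rlm$ in hand, the classical insertion argument (construct $\sigma$ by inserting $2,3,\dots,r$ one by one into the singleton $(1)$; at step $i$ the insertion of the new maximum at the rightmost position is a right-to-left minimum while the other $i-1$ positions leave the existing $\rlm$-count unchanged) gives $\sum_\sigma w^{\rlm(\sigma)}=\prod_{j=0}^{r-1}(w+j)$, and the substitution $w\mapsto 1/w$ followed by multiplication by $w^r$ yields the desired identity.

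With the key identity in place, I expand
\begin{equation*}
\nexp{p}(x(\nexp{q}(t)-1))=\sum_{k\ge 0}\frac{c_k(u)}{k!}\,x^k\,(\nexp{q}(t)-1)^k
\end{equation*}
and use the standard interpretation of the $k$th power of an exponential generating function: the coefficient of $t^n/n!$ in $(\nexp{q}(t)-1)^k$ equals the sum, over ordered partitions $[n]=B_1\sqcup\cdots\sqcup B_k$ into nonempty blocks, of $\prod_{i=1}^k c_{|B_i|}(v)$. Combining the factor $1/k!$ with the $k!$ orderings of each underlying unordered partition, the $x^k t^n/n!$-coefficient collapses to
\begin{equation*}
c_k(u)\sum_{\pi_0\vdash[n],\,|\pi_0|=k}\prod_{B\in\pi_0}c_{|B|}(v).
\end{equation*}

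Finally, I match this expression with $S(n,k;u,v)=\sum_{\pi\in\LLP_{n,k}} u^{\nsb(\pi)}v^{\nse(\pi)}$. Every $\pi\in\LLP_{n,k}$ is built from an unordered set partition $\pi_0$ by choosing a linear ordering of the $k$ blocks and a linear ordering inside each block. By the key identity applied block-by-block, summing $v^{\nse}$ over the linear orderings of each block $B$ produces $c_{|B|}(v)$, so the block contributions aggregate to $\prod_{B\in\pi_0}c_{|B|}(v)$. For the outer ordering, $\nsb(\pi)$ is the right-move statistic applied to the sequence of the $k$ distinct block-openers; since this statistic depends only on the relative order of the openers and not on their values, summing $u^{\nsb}$ over the $k!$ orderings of the blocks of $\pi_0$ reproduces $\sum_{\tau\in S_k} u^{\nse(\tau)}=c_k(u)$, by the key identity applied to $[k]$. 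Putting the two contributions together yields exactly the expression derived from the EGF composition above, proving $\Spq(n,k)=\Stir(n,k;p-1,q-1)$. The main obstacle is the identity $\nse(\sigma)=r-\rlm(\sigma)$: the lower bound is essentially tautological, but the upper bound requires careful tracking of intermediate positions to verify that the decreasing-value sort algorithm terminates correctly in exactly $r-\rlm(\sigma)$ right-moves.
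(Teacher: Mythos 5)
You should first note that the paper you are working from does not prove this statement at all: it is imported as Theorem 3.4 of \cite{Herscovici2017}, so there is no internal proof to compare against. Judged on its own, your argument is correct and self-contained, and its ingredients are precisely the tools the present paper develops for other purposes: your coefficient formula $c_r(w)=\prod_{j=0}^{r-1}(1+jw)$ is \eqref{qcoef}; your key identity $c_r(w)=\sum_{\sigma\in S_r}w^{\nse(\sigma)}$ is the content of Remark~\ref{remark-part-perm} combined with the proposition of Section~4 (that $\nse(\sigma)+\rlm(\sigma)=n$) and the classical distribution of right-to-left minima; and your composition-of-exponential-generating-functions step, with the factorization of $\sum_{\pi\in\LLP_{n,k}}u^{\nsb(\pi)}v^{\nse(\pi)}$ into an outer factor $c_k(u)$ (since $\nsb$ depends only on the relative order of the block openers) and inner factors $c_{|B|}(v)$ (since $\nse$ is additive over blocks and order-isomorphism invariant), is exactly the mechanism behind Theorems~\ref{Result} and \ref{Result-LLP}. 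In effect you re-derive \eqref{spqnk} by the combinatorial route that this paper advocates, rather than by the generating-function/recurrence techniques attributed to the original reference; that is a legitimate and arguably more transparent alternative, and it simultaneously yields the explicit formulas \eqref{result} and \eqref{coef-uivj}.

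One step needs tightening. In your proof of $\nse(\sigma)=r-\rlm(\sigma)$, the upper-bound verification is the only delicate point, and your parenthetical justification (``otherwise values $y,y+1,\dots,r$ would all occupy positions $\ge y$ originally'') conflates current positions with original ones: by the time $y$ is processed, earlier moves have shifted positions, so the contradiction does not follow as written. The clean statement is an invariant proved by downward induction on the processed value: when the algorithm reaches $y$, every value $z>y$ already occupies position $z$ (for the base case this follows because all values above the largest non-right-to-left minimum are right-to-left minima and are forced to sit at their own positions; for the inductive step, note that moving an element to position $y_i$ shifts only positions $\le y_i$, hence never disturbs larger values already in place). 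Then $y$ sits at a position $\le y$, and position exactly $y$ would force every element to its right to be larger, contradicting that $y$ is not a right-to-left minimum. Alternatively, you can sidestep the minimization question entirely: the paper's Section~4 works with the characterization $\nse(\sigma)=|\{j:\sigma_j>\sigma_k\ \text{for some}\ k>j\}|$, i.e.\ $\nse=r-\rlm$ by definition, which is all your EGF argument actually needs.
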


This work continues the study of the $p,q$-deformed Touchard polynomials defined by \eqref{Eq1-tpq}. 
In \cite{Herscovici2017}, an explicit formulae for  $\Stir(n,k;u,0)$ was obtained by applying techniques of generating functions. However, no explicit formulae were given for $\Stir(n,k;0,v)$ or $\Stir(n,k;u,v)$. The reason is a more complicated recurrence relation involving the size $n$ of the set. In this work we establish those explicit formulae not by techniques of generating functions but rather by applying combinatorial analysis of the deformed exponential functions. These results  
are described in Section~2. In Section~3, we give an explicit expression for the $p,q$-deformed Touchard polynomials. Finally, in Section~4, we describe a natural connection of the statistic $\nse$ with the right-to-left minima statistic on permutations. 

\section{Sets of lists and lists of lists}
We begin this note by noting the special role of the coefficients $S_{p,q}(n,k)$ of the deformed Touchard
polynomials $T_n(x;p,q)$ in the particular cases that $p,q\in\{1,2\}$.
\begin{corollary}\label{cor}
\begin{equation}
\begin{aligned}
&\stwo{n}{k}=S_{1,1}(n,k)=|\SSP_{n,k}|;\ \ \  S_{2,1}(n,k)=|\LSP_{n,k}|;\\
&\ S_{1,2}(n,k)=|\SLP_{n,k}|;\ \ \
S_{2,2}(n,k)=|\LLP_{n,k}|.
\end{aligned}
\end{equation}
\end{corollary}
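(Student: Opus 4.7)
The plan is to derive all four identities directly from Theorem 1.1 (which says $S_{p,q}(n,k)=S(n,k;p-1,q-1)$) by specializing the generating function \eqref{snkuv} at the four points $(u,v)\in\{0,1\}^2$ and then identifying the surviving terms with known families of partition-type objects.

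First I would handle the easiest case, $S_{2,2}(n,k)$. By Theorem 1.1 this equals $S(n,k;1,1)=\sum_{\pi\in\LLP_{n,k}}1^{\nsb(\pi)}1^{\nse(\pi)}=|\LLP_{n,k}|$, since every term in the sum is $1$. Next, for $S_{1,2}(n,k)=S(n,k;0,1)$, the factor $0^{\nsb(\pi)}$ kills every $\pi$ with $\nsb(\pi)>0$ (using the convention $0^0=1$), so the sum counts exactly those $\pi\in\LLP_{n,k}$ whose blocks, read left to right, already have their minima in increasing order. I would then exhibit a bijection between such ``block-standard'' lists of lists and sets of lists: given a set of lists, arrange the component lists in increasing order of their minima; conversely, given a block-standard list of lists, forget the order of the blocks. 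This bijection clearly preserves the underlying data inside the blocks, so $S_{1,2}(n,k)=|\SLP_{n,k}|$. The case $S_{2,1}(n,k)=S(n,k;1,0)$ is symmetric: the factor $0^{\nse(\pi)}$ kills every $\pi$ in which some block has elements out of increasing order, leaving exactly those $\pi\in\LLP_{n,k}$ whose blocks are internally sorted; such objects are in obvious bijection with lists of sets (forget/restore the forced internal order), giving $|\LSP_{n,k}|$. Finally, $S_{1,1}(n,k)=S(n,k;0,0)$ counts $\pi\in\LLP_{n,k}$ with both $\nsb(\pi)=\nse(\pi)=0$, i.e.\ block-standard with internally sorted blocks; this data is equivalent to a set of sets, so it equals $|\SSP_{n,k}|=\stwot{n}{k}$.

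Because the heavy lifting has already been done in Theorem 1.1, the only thing that remains is to verify the four bijections, and these are genuinely routine: in each case the ordering that is being ``forgotten'' on one side is uniquely reconstructible from the content of the blocks (either by sorting each block or by sorting the blocks by their minimum opener), so the maps are well-defined inverses of one another. The single point that requires a brief comment is the convention $0^0=1$ in \eqref{snkuv}, which is what makes a $\pi$ with $\nsb(\pi)=0$ contribute $1$ rather than $0$ at $u=0$; this is the standard convention for such generating functions and should be stated explicitly. There is no real obstacle in the argument, so the main task is simply to lay out the four specializations cleanly and invoke the bijections, which I would do in a single short proof.
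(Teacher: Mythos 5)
Your proposal is correct and follows essentially the same route as the paper, which simply observes that the result ``follows immediately'' from the generating function \eqref{snkuv} and the identity \eqref{spqnk}; you merely spell out the four specializations $(u,v)\in\{0,1\}^2$ and the routine bijections that the paper leaves implicit.
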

\begin{proof}[Combinatorial Proof]
The result follows immediately from \eqref{snkuv} and \eqref{spqnk}.
\end{proof}
\begin{remark}
It is easy to give a combinatorial proof that
$$
|\LSP_{n,k}|=k!\stwo{n}{k},\ \ |\LLP_{n,k}|=n!\binom {n-1}{k-1}\ \  \text{and}\ |\SLP_{n,k}|=\frac{n!}{k!}\binom {n-1}{k-1}.
$$
\end{remark}

More generally, if we set $u=p-1$ and $v=q-1$, then note that
$$
\begin{aligned}
&\left[\frac{t^n}{n!}x^ku^iv^j\right](\exp_{u+1}(x(\exp_{v+1}(t)-1))=[u^iv^j]S(n,k,u,v)=\\
&|\{\pi\in\LLP_{n,k}:\nsb(\pi)=i
\ \text{and}\ \nse(\pi)=j\}|.
\end{aligned}
$$
Note that we can also define $\nsb(\pi)$, for $\pi\in \LSP_{n,k}$, and we can also define $\nse(\pi)$, for
$\pi\in \SLP_{n,k}$, and that we have
$$
\begin{aligned}
&[u^iv^0]S(n,k,u,v)=[u^i]S(n,k,u,0)=
|\{\pi\in\LSP_{n,k}:\nsb(\pi)=i\}|;\\
&[u^0v^j]S(n,k,u,v)=[v^j]S(n,k,0,v)=
|\{\pi\in\SLP_{n,k}:\nsb(\pi)=j\}|.
\end{aligned}
$$
It was shown (see Theorem 3.5 in \cite{Herscovici2017}) that
\begin{align}
[u^iv^0]\Stir(n,k;u,v)=[u^i]\Stir(n,k;u,0)=\stwo{n}{k}\sone{k}{k-i},\label{old1}
\end{align}
where  $\sonet{n}{k}$ denotes unsigned Stirling numbers of the first kind.
Thus,
\begin{align}
|\{\pi\in\LSP_{n,k}:\nsb(\pi)=i\}|=\stwo{n}{k}\sone{k}{k-i}.\nn
\end{align}
\begin{remark}\label{remark-part-perm}
This formula has an additional aspect. Note that $|\{\pi\in\LSP_{n,n}:\nsb(\pi)=i\}|=\sonet{n}{n-i}$. Now converting $\pi\in\LSP_{n,n}$ to a standard block is equivalent to converting a single block of $n$ elements to standard elements. Thus we can translate the statistic $\nse$, defined initially on partitions of sets, into a statistic defined on permutations. There are $\sonet{n}{n-i}$ permutations $\sigma\in S_n$ with exactly $i$ elements that must be moved to the right in order to get the identity permutation.
\end{remark}

We will now calculate
$$
[u^0v^j]S(n,k,u,v)=[v^j]S(n,k,0,v)=
|\{\pi\in\SLP_{n,k}:\nse(\pi)=j\}|
$$
and     
$$
[u^iv^j]S(n,k,u,v)=
|\{\pi\in\LLP_{n,k}:\nsb(\pi)=i\ \text{and}\ \nse(\pi)=j\}|.
$$

Let us consider a combinatorial aspects of \eqref{DefNextqExp}.
For this purpose we replace $q-1=v$ as it was done in \cite{Herscovici2017}. Note that \eqref{DefNextqExp} has the following 
Taylor series expansion (see \cite{Borges1998})
\begin{align*}
\nexp{q}(t)=1+\sum_{n=1}^\infty Q_{n-1}(q)\frac{t^n}{n!},
\end{align*}
where $Q_n(q)=1\cdot q\cdot(2q-1)\cdot(3q-2)\cdot\ldots\cdot(nq-(n-1))$ with initial condition $Q_0(q)=1$.
Therefore, by applying our substitution, we obtain
\begin{align}
\nexp{q}(t)\Big|_{q-1=v}&=(1-vt)^\frac{1}{-v}=1+\sum_{n=1}^\infty Q_{n-1}(v+1)\frac{t^n}{n!}.\label{expv}
\end{align}
Let us consider now the coefficients of this exponential generating function for $n\geq 1$.
\begin{align}
\left[\frac{t^n}{n!}\right]\nexp{q}(t)\Big|_{q-1=v}&=Q_{n-1}(v+1)\\
&=(1+v)(1+2v)(1+3v)\cdot\ldots\cdot(1+(n-1)v)\\
&=\prod_{m=0}^{n-1}(mv+1)=\sum_{j=0}^{n-1}\sone{n}{n-j}v^j.\label
{qcoef}
\end{align}
From comparing \eqref{qcoef} with \eqref{old1} we can see that counting an $\nse$ in permutations of $[n]$ coincides with counting an $\nsb$ in partitions of $[n]$ with exactly $n$ blocks.
Therefore the coefficients of the
exponential generating function \eqref{expv} count the number of elements with respect to statistic $\nse$. To be more specific, let us denote by $\nse(\sigma_n,j)$ a number of permutations $\sigma_n$ of a set $[n]$ with exactly $j$  elements which should be moved to the right (see Remark~\ref{remark-part-perm}). Then for $0\leq j\leq n-1$, we have $\nse(\sigma_n,j)=\sonet{n}{n-j}$.

Now we can find the
explicit distribution of the statistic $\nse$ as a composition of exponential generating functions \cite[Theorem~3.3]{Aigner2007}. Composition of \eqref{expv} and the exponential function given by $e^{(1-vt)^\frac{1}{-v}}$ is another e.g.f. defining a distribution  of the number of 
elements which must be moved to the right (w.r.t. $\nse$) in partitions of a set $[n]$ into lists. The inner structure of the lists with respect to the statistic $\nse$ is described by \eqref{qcoef}. The outer structure --  a set partition -- is described by the Stirling numbers of the second kind. Therefore we can state the following theorem.

\begin{theorem}\label{Result}
\begin{equation}\label{result}
\begin{aligned}
&[u^0v^j]S(n,k,u,v)=[v^j](S(n,k,0,v)=
|\{\pi\in\SLP_{n,k}:\nse(\pi)=j\}| =\sone{n}{n-j}\stwo{n-j}{k}.
\end{aligned}
\end{equation}
\end{theorem}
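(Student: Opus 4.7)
The plan is to compute the generating function $\exp(x(\nexp{q}(t)-1))|_{q-1=v}$ as a formal power series in $x,t,v$ and extract the coefficient of $\frac{x^k v^j t^n}{n!}$. By \eqref{Eq1-tpq}--\eqref{TpqSpq} (with the convention $\nexp{1}(y)=e^y$), that coefficient equals $[v^j]\Stir(n,k;0,v)$, which the paragraph preceding the theorem identifies with $|\{\pi\in\SLP_{n,k}:\nse(\pi)=j\}|$ via the EGF compositional formula, the per-list contribution coming from \eqref{qcoef}.

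The first move is a change of variable that brings the deformed generating function into a form where the classical Stirling identities apply directly. Setting
$$L(t,v) := -\frac{1}{v}\ln(1-vt) = \sum_{k\geq 1}\frac{v^{k-1}t^k}{k},$$
one has $\nexp{q}(t)|_{q-1=v} = (1-vt)^{-1/v} = e^{L(t,v)}$, and therefore
$$\exp(x(\nexp{q}(t)-1))\Big|_{q-1=v} = \exp\bigl(x(e^{L(t,v)}-1)\bigr),$$
i.e., the deformed Touchard EGF is obtained from the classical one by substituting $L(t,v)$ for $t$.

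I would then apply two standard EGF identities in sequence. The Stirling-second-kind identity
$$\exp(x(e^s-1)) = \sum_{n,k\geq 0}\stwot{n}{k}x^k\frac{s^n}{n!}$$
turns the outer exponential into a sum of powers of $L$, and the Stirling-first-kind identity $\frac{(-\ln(1-y))^n}{n!} = \sum_{m\geq n}\sonet{m}{n}\frac{y^m}{m!}$, applied with $y=vt$, converts those powers back into powers of $t$ with a controlled $v$-weight:
$$\frac{L(t,v)^n}{n!} = \frac{1}{v^n}\cdot\frac{(-\ln(1-vt))^n}{n!} = \sum_{m\geq n}\sonet{m}{n}v^{m-n}\frac{t^m}{m!}.$$
Combining the two and reading off the coefficient of $\frac{x^k v^j t^N}{N!}$ forces $m=N$ and $n=N-j$, leaving the single surviving term $\sonet{N}{N-j}\stwot{N-j}{k}$, which is the claimed formula.

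The only delicate point is the bookkeeping of the $v$-weight through the substitution $t\mapsto L(t,v)$: the Stirling-second-kind factor ends up indexed by $N-j$ rather than $N$ precisely because each term in the logarithmic expansion of $L^n$ carries a factor $v^{m-n}$, so the total $v$-degree equals the gap between the power of $L$ (which the Stirling-second-kind identity records as its summation index $n$) and the power of $t$ (which indexes the coefficient being extracted). Beyond this, the argument is a short formal-power-series manipulation and requires no new combinatorial input beyond the EGF interpretation of $\SLP_{n,k}$ set up in the paragraph just above the theorem statement.
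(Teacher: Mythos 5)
Your argument is correct: the rewriting $\nexp{q}(t)\big|_{q-1=v}=(1-vt)^{-1/v}=e^{L(t,v)}$ with $L(t,v)=-v^{-1}\ln(1-vt)$, followed by the classical EGF identities for $\stwot{n}{k}$ and for $\frac{(-\ln(1-y))^n}{n!}=\sum_{m\ge n}\sonet{m}{n}\frac{y^m}{m!}$, does force $m=N$, $n=N-j$ in the coefficient extraction and yields $\sonet{N}{N-j}\stwot{N-j}{k}$, and your appeal to \eqref{snkuv} and \eqref{spqnk} (with $p=1$, so $u=0$) legitimately identifies this coefficient with $|\{\pi\in\SLP_{n,k}:\nse(\pi)=j\}|$. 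The route, however, is genuinely different from the paper's. The paper never writes the substitution $t\mapsto L(t,v)$ nor the vertical EGF of the first-kind Stirling numbers; instead it expands the coefficients of the deformed exponential as the row polynomials $\prod_{m=0}^{n-1}(1+mv)=\sum_j\sonet{n}{n-j}v^j$ (equation \eqref{qcoef}), interprets these via Remark \ref{remark-part-perm} as the $\nse$-distribution on a single list, and then invokes the combinatorial composition theorem for exponential generating functions: each block of a set partition is a $v$-weighted list, the outer set structure contributing the Stirling numbers of the second kind. Your version buys a fully explicit, purely formal-power-series verification of the product form --- in particular it makes transparent exactly why the second-kind index drops from $n$ to $n-j$, a point the paper's verbal ``inner structure / outer structure'' argument leaves somewhat implicit --- while the paper's version buys the combinatorial meaning of each factor and keeps the derivation aligned with the statistic-on-partitions viewpoint used throughout. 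Both are sound; yours is arguably the tighter proof of the explicit formula, the paper's the more conceptual one.
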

\begin{corollary}
The average number of elements in partitions of $[n]$ into sets of lists in accordance with the statistic $\nse$ is given by
\begin{align*}
\frac{\ds{\sum_{j=0}^{n-1}j\sone{n}{n-j}\bell_{n-j}}}{\ds{\sum_{k=0}^n\sone{n}{k}\bell_k}},
\end{align*}
where $\bell_n$ is the $n$th Bell number.
\end{corollary}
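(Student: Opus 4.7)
The plan is straightforward: the ratio in the statement is simply the expectation $\mathbb{E}[\nse(\pi)]$ where $\pi$ is sampled uniformly from the disjoint union $\bigcup_{k} \SLP_{n,k}$. So my approach is to compute the numerator $\sum_{\pi} \nse(\pi)$ and the denominator $\sum_{\pi} 1$ separately, in each case by partitioning according to the block count $k$ and the value of the statistic, then invoking Theorem~\ref{Result} and collapsing one of the sums via the Bell-number identity $\bell_m = \sum_{k=0}^{m} \stwo{m}{k}$.

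For the numerator, I would first write
$$\sum_{\pi} \nse(\pi) = \sum_{k=1}^{n}\sum_{j=0}^{n-1} j\cdot \bigl|\{\pi\in\SLP_{n,k} : \nse(\pi)=j\}\bigr|,$$
then replace the inner cardinality by $\sone{n}{n-j}\stwo{n-j}{k}$ using Theorem~\ref{Result}. Interchanging the order of summation, the sum over $k$ of $\stwo{n-j}{k}$ telescopes to $\bell_{n-j}$, producing exactly $\sum_{j=0}^{n-1} j \sone{n}{n-j} \bell_{n-j}$ as claimed.

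For the denominator, I would apply the same strategy without the factor $j$: writing $|\SLP_{n,k}| = \sum_{j=0}^{n-1} \sone{n}{n-j}\stwo{n-j}{k}$ via Theorem~\ref{Result}, summing over $k$ to collapse the Stirling-second-kind factor into $\bell_{n-j}$, and finally reindexing $m = n-j$ to obtain $\sum_{m=1}^{n} \sone{n}{m} \bell_m$. Since $\sone{n}{0} = 0$ for $n\geq 1$, this equals $\sum_{k=0}^{n}\sone{n}{k}\bell_k$, matching the denominator in the statement.

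There is no real obstacle: the argument is essentially bookkeeping, and the only point requiring mild care is verifying that the interchange of the finite double sum and the reindexing produce exactly the ranges written in the corollary. All of this follows once Theorem~\ref{Result} is in hand.
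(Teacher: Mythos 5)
Your proposal is correct and follows essentially the same route as the paper: the paper packages the same computation into the polynomial $f_n(v)=\sum_{k}\sum_{j}\sonet{n}{n-j}\stwot{n-j}{k}v^j$, collapses the $k$-sum to $\bell_{n-j}$ exactly as you do, and extracts your weighted sum $\sum_j j\sonet{n}{n-j}\bell_{n-j}$ as $\partial_v f_n(v)\big|_{v=1}$, dividing by $\aslp{n}=\sum_k\sonet{n}{k}\bell_k$. Your direct summation (including the harmless extension of the $j$-range, since $\stwot{n-j}{k}=0$ for $j>n-k$) is just an unpacked version of that derivative argument.
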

\begin{proof}
Let us define a function $f_n(v)=
\sum_{k=1}^n\sum_{j=0}^{n-k}\sonet{n}{n-j}\stwot{n-j}{k}v^j$. After changing the summation order, we obtain $f_n(v)=
\sum_{j=0}^{n-1}\sonet{n}{n-j}\bell_{n-j}v^j$. 
Therefore the average number of elements w.r.t statistics $\nse$ is given by
\begin{align}
\frac{\partial_vf_n(v)\Big|_{v=1}}{\aslp{n}}
=\frac{\ds{\sum_{j=0}^{n-1}j\sone{n}{n-j}\bell_{n-j}}}{\ds{\sum_{k=0}^n\sone{n}{k}\bell_k}},
\end{align}
and the proof is complete.
\end{proof}
We have the following corollary concerning Stirling numbers of the second kind and unsigned Stirling numbers of the first kind.
\begin{corollary}
\begin{equation}\label{Stirling12}
\sum_{l=k}^n\sone{n}{l}\stwo{l}{k}=\frac{n!}{k!}\binom {n-1}{k-1}.
\end{equation}
\end{corollary}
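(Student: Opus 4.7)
The plan is to recognize the right-hand side as $|\SLP_{n,k}|$ and the left-hand side as the total count obtained by summing the refined statistic $\nse$-distribution from Theorem~\ref{Result}, then let both combinatorial interpretations of the same quantity yield the identity.

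More concretely, I would first invoke the remark early in Section~2, which states $|\SLP_{n,k}|=\frac{n!}{k!}\binom{n-1}{k-1}$, so that the right-hand side of \eqref{Stirling12} is identified as a cardinality. Next I would observe that every $\pi\in\SLP_{n,k}$ has some value of $\nse(\pi)$ between $0$ and $n-k$, so partitioning $\SLP_{n,k}$ according to this statistic gives
\begin{equation*}
|\SLP_{n,k}|=\sum_{j=0}^{n-k}\bigl|\{\pi\in\SLP_{n,k}:\nse(\pi)=j\}\bigr|.
\end{equation*}
By Theorem~\ref{Result}, each summand equals $\sonet{n}{n-j}\stwot{n-j}{k}$.

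The final step is a reindexing: setting $l=n-j$, the range $0\le j\le n-k$ becomes $k\le l\le n$, and the sum turns into $\sum_{l=k}^{n}\sonet{n}{l}\stwot{l}{k}$. Combining this with the explicit formula for $|\SLP_{n,k}|$ yields \eqref{Stirling12}.

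There is no real obstacle here; the identity is a direct numerical consequence of Theorem~\ref{Result} together with the elementary enumeration of sets of lists recorded in the Remark. The only care needed is to match the indexing between the exponents $j$ of $v$ in the generating function and the summation index $l$ on the left-hand side, and to justify that the upper limit $j=n-k$ (forced by $\nse(\pi)\le n-k$) corresponds precisely to the lower limit $l=k$, so that nothing is lost or double-counted in the reindexing.
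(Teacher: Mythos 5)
Your proof is correct and takes essentially the same route as the paper: the paper also obtains \eqref{Stirling12} immediately by combining Theorem~\ref{Result} with the formula $|\SLP_{n,k}|=\frac{n!}{k!}\binom{n-1}{k-1}$ from the remark following Corollary~\ref{cor}, summing over the statistic $\nse$. Your explicit reindexing $l=n-j$ just spells out the step the paper leaves implicit.
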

\begin{proof}
The corollary follows immediately from the first equation in \eqref{result} and the formula
for $|\SLP_{n,k}|$ appearing in the remark following Corollary \ref{cor}.
\end{proof}
\bf\noindent Remark.\rm\ 
A completely combinatorical proof of \eqref{Stirling12} can be found in \cite{Herscovici2018a}.
By comparison, recall the classical formula concerning 
Stirling numbers of the second kind and (signed) Stirling numbers of the first kind 
\big($(-1)^{n-l}\sonet{n}{l}$\big):\newline
$\sum_{l=k}^n(-1)^{n-l}\sonet{n}{l}\stwot{l}{k}=\sum_{l=k}^n\stwot{n}{l}(-1)^{l-k}\sonet{l}{k}=
\delta_{nk}$.

Now we can state the results about the lists of lists.
\begin{theorem}\label{Result-LLP}
\begin{align}\label{coef-uivj}
[u^iv^j]S(n,k,u,v)&=
|\{\pi\in\LLP_{n,k}:\nsb(\pi)=i\ \text{and}\ \nse(\pi)=j\}|\nn\\
&=\sone{n}{n-j}\stwo{n-j}{k}\sone{k}{k-i}.
\end{align}
\end{theorem}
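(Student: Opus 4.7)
The plan is to extract the coefficient $[u^iv^j]$ directly from the bivariate exponential generating function
$$
\nexp{u+1}\bigl(x(\nexp{v+1}(t)-1)\bigr)=\sum_{n,k}S(n,k;u,v)\frac{t^n}{n!}x^k,
$$
by peeling off the outer and inner exponentials one at a time, in the spirit of the argument that gave Theorem~\ref{Result}.

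First I would apply expansion \eqref{expv} to the outer function $\nexp{u+1}$ with argument $x(\nexp{v+1}(t)-1)$, which yields
$$
\nexp{u+1}\bigl(x(\nexp{v+1}(t)-1)\bigr)=1+\sum_{k\ge 1}Q_{k-1}(u+1)\,\frac{x^k\bigl(\nexp{v+1}(t)-1\bigr)^k}{k!}.
$$
Reading off $[x^k]$ separates out the clean factor $\frac{Q_{k-1}(u+1)}{k!}\bigl(\nexp{v+1}(t)-1\bigr)^k$, and applying \eqref{qcoef} with $n\mapsto k$ and $v\mapsto u$ gives $Q_{k-1}(u+1)=\sum_{i=0}^{k-1}\sone{k}{k-i}u^i$. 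Since this coefficient is independent of $t$ and $v$, this step already isolates the entire $u$-dependence and contributes the factor $\sone{k}{k-i}$ to $[u^iv^j]S(n,k;u,v)$.

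Next I would extract $[t^n/n!]$ from the remaining piece $\tfrac{1}{k!}(\nexp{v+1}(t)-1)^k$. But this is precisely the quantity identified in Theorem~\ref{Result}: by the exponential/compositional formula it is the generating polynomial for partitions of $[n]$ into exactly $k$ unordered nonempty lists, weighted by $v^{\nse}$. Theorem~\ref{Result} therefore supplies
$$
\left[\frac{t^n}{n!}\right]\frac{\bigl(\nexp{v+1}(t)-1\bigr)^k}{k!}=\sum_{j}\sone{n}{n-j}\stwo{n-j}{k}v^j,
$$
and multiplying by the previously isolated $u$-factor gives $\sone{n}{n-j}\stwo{n-j}{k}\sone{k}{k-i}$ as the coefficient of $u^iv^j$, matching \eqref{coef-uivj}.

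The combinatorial content behind the factorization is what makes this work, and is the one place to be careful. The outer layer $\frac{Q_{k-1}(u+1)x^k}{k!}$ records, by exactly the argument the paper uses to identify $\nse$ on permutations in the discussion after \eqref{qcoef} (now applied with $n\mapsto k$ and $v\mapsto u$), the $k!$ orderings of the $k$ blocks weighted by $u^{\nsb}$; the inner $\tfrac{1}{k!}(\nexp{v+1}(t)-1)^k$ records a set partition of $[n]$ into $k$ lists weighted by $v^{\nse}$. The statistics $\nsb$ and $\nse$ depend on genuinely disjoint pieces of data for a $\pi\in\LLP_{n,k}$ — the relative order of the blocks versus the internal order of the elements of each block — so their joint generating function factors as the product of the two marginals \eqref{old1} and \eqref{result}, completing the proof.
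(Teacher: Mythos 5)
Your proposal is correct, and it reaches \eqref{coef-uivj} by a somewhat different route than the paper. The paper's own proof is a short combinatorial combination argument: it takes \eqref{old1} (the distribution of $\nsb$, which depends only on the arrangement of the blocks) and \eqref{result} (the distribution of $\nse$, which depends only on the arrangement of elements inside the blocks), and asserts that since the two statistics govern disjoint pieces of the structure of a $\pi\in\LLP_{n,k}$, the joint count is the product. You instead derive the factorization analytically from the bivariate generating function: writing $S(n,k;u,v)=[x^k][t^n/n!]\,\nexp{u+1}\bigl(x(\nexp{v+1}(t)-1)\bigr)$ (legitimate, via \eqref{Eq1-tpq}, \eqref{TpqSpq} and \eqref{spqnk}), the outer expansion \eqref{expv} isolates the $u$-dependence in the single factor $Q_{k-1}(u+1)=\sum_{i}\sone{k}{k-i}u^i$ by \eqref{qcoef}, while the remaining factor $\frac{1}{k!}(\nexp{v+1}(t)-1)^k$ is exactly $S(n,k;0,v)$, whose coefficients Theorem~\ref{Result} supplies. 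This buys you something the paper leaves implicit: the product structure (i.e.\ the independence of $\nsb$ and $\nse$) is not asserted combinatorially but falls out of the coefficient extraction, so your argument is arguably more self-contained and rigorous at that point; the paper's version, in exchange, makes the combinatorial reason for the factorization more transparent. Your closing paragraph about the outer layer encoding the $k!$ block orderings weighted by $u^{\nsb}$ is a nice sanity check but is not needed for the formula itself, since the first equality in \eqref{coef-uivj} is just the definition \eqref{snkuv}.
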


\begin{proof}
The proof can be easily obtained from the result \eqref{old1} and the Theorem~\ref{Result-LLP}  
as following. The number of partitions of $[n]$ into $k$ lists of sets w.r.t. the statistic $\nsb$ is given by $\stwot{n}{k}\sonet{k}{k-i}$ (see \eqref{old1}). The statistic $\nse$ is responsible for the arrangement of elements \emph{inside} the blocks and we already know that there are exactly $\sonet{n}{n-j}\stwot{n-j}{k}$ partitions of $[n]$ into $k$ blocks with $j$  
elements which must be moved to the right (see \eqref{result}). Therefore, by combining these results we obtain the statement \eqref{coef-uivj}.
\end{proof}

\section{$p,q$-deformed Touchard polynomials}
It was shown in \cite{Herscovici2017} that the coefficients of the $p,q$-deformed Touchard polynomials are closely related to the distribution of the statistics $\nsb$ and $\nse$ on set partitions \eqref{spqnk}. In previous section we obtained the explicit expression \eqref{coef-uivj} for coefficients of the terms $u^iv^j$. Now we can get an explicit expression for the deformed Touchard polynomials as polynomials in the variables $x$, $p$, and $q$. 
\begin{theorem}\label{th5}
The NEXT $p,q$-deformed Touchard polynomials $\tpq{n}$  defined by the generating function \eqref{Eq1-tpq} are polynomials in the variables $x$, $p$, and $q$ given by
\begin{align}
\tpq{n}=\sum_{k=0}^n\sum_{m=0}^{k-1}\sum_{\ell=0}^{n-k}\sum_{i=m}^{k-1}\sum_{j=\ell}^{n-k}\binom{i}{m}\binom{j}{\ell}(-1)^{m+\ell}
\stir(n,n-j)\Stir(n-j,k)\stir(k,k-i)p^mq^\ell x^k,\label{tpq-explicit}
\end{align}
where $\stir(i,j)$ and $\Stir(i,j)$ are the Stirling numbers of the first and second kind respectively.
\end{theorem}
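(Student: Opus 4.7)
The plan is to obtain the formula by substituting the explicit expression for the coefficients of $S(n,k;u,v)$ from Theorem~\ref{Result-LLP} into the definition of $T_n(x;p,q)$, then expanding $(p-1)^i$ and $(q-1)^j$ via the binomial theorem and swapping the order of summation so that the outer indices are $m$ and $\ell$.

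More precisely, I would begin from \eqref{TpqSpq} and \eqref{spqnk}, which give
$$
\tpq{n}=\sum_{k=0}^n S(n,k;p-1,q-1)\,x^k.
$$
Applying Theorem~\ref{Result-LLP} (equation \eqref{coef-uivj}) and using that $\nsb$ ranges over $\{0,\dots,k-1\}$ while $\nse$ ranges over $\{0,\dots,n-k\}$, I would expand
$$
S(n,k;u,v)=\sum_{i=0}^{k-1}\sum_{j=0}^{n-k}\stir(n,n-j)\,\Stir(n-j,k)\,\stir(k,k-i)\,u^{i}v^{j},
$$
and then set $u=p-1$, $v=q-1$ to obtain a representation of $S_{p,q}(n,k)$ as a double sum indexed by $(i,j)$.

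Next, I would apply the binomial theorem to both $(p-1)^{i}$ and $(q-1)^{j}$, writing
$$
(p-1)^{i}(q-1)^{j}=\sum_{m=0}^{i}\sum_{\ell=0}^{j}\binom{i}{m}\binom{j}{\ell}(-1)^{i-m+j-\ell}p^{m}q^{\ell},
$$
and then swap the order of summation so that $m$ and $\ell$ become the outer indices, with the inner sums now running over $i=m,\dots,k-1$ and $j=\ell,\dots,n-k$ as in the target formula \eqref{tpq-explicit}. Collecting the factor $x^k$ and combining everything yields the quintuple sum claimed in the statement.

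The whole argument is essentially a direct substitution plus a reshuffle of summation indices; conceptually there is no real obstacle once Theorem~\ref{Result-LLP} is in hand. The only thing requiring care is the combinatorial bookkeeping of the index ranges and signs when interchanging sums, and verifying that no contributions are lost at the boundaries $i=m$, $j=\ell$ where the binomial coefficients automatically enforce the correct support.
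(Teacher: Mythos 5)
Your route is exactly the paper's: start from \eqref{TpqSpq} and \eqref{spqnk}, substitute the coefficient formula of Theorem~\ref{Result-LLP}, expand $(p-1)^i(q-1)^j$ by the binomial theorem, and interchange the sums so that $m$ and $\ell$ become the outer indices with $i$ running from $m$ to $k-1$ and $j$ from $\ell$ to $n-k$. That is the entire proof, and your index ranges agree with the paper's.

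The one place where your write-up does not literally close is the sign bookkeeping. Theorem~\ref{Result-LLP} gives the coefficients of $S(n,k;u,v)$ as \emph{unsigned} Stirling numbers,
\[
[u^iv^j]S(n,k;u,v)=\sonet{n}{n-j}\stwot{n-j}{k}\sone{k}{k-i},
\]
which are cardinalities and hence nonnegative, whereas your displayed expansion writes these coefficients as $\stir(n,n-j)\,\Stir(n-j,k)\,\stir(k,k-i)$ with the signed first-kind numbers; that identity is false whenever $i+j$ is odd. If one then carries your chain literally --- signed Stirling numbers in the expansion together with the binomial sign $(-1)^{(i-m)+(j-\ell)}$ --- the resulting sign is $(-1)^{i+j-m-\ell}$, not the $(-1)^{m+\ell}$ of \eqref{tpq-explicit}, so ``combining everything'' does not yet yield the stated formula. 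The missing (one-line) step is the conversion $\stir(a,b)=(-1)^{a-b}\sonet{a}{b}$ applied to the two first-kind factors, which contributes $(-1)^{i}$ and $(-1)^{j}$ and collapses the total sign $(-1)^{(i-m)+(j-\ell)+i+j}$ to $(-1)^{m+\ell}$. With the expansion stated in unsigned form and this conversion inserted before (or after) the interchange of summation, your argument coincides with the paper's proof.
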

\begin{proof}
From \eqref{TpqSpq} and \eqref{spqnk} we obtain
\begin{align*}
\tpq{n}=\sum_{k=0}^n\Spq(n,k)x^k=\sum_{k=0}^n\Stir(n,k;p-1,q-1)x^k.
\end{align*}
By applying  
\eqref{coef-uivj}, we get
\begin{align}
\tpq{n}=\sum_{k=0}^n\sum_{j=0}^{n-k}\sum_{i=0}^{k-1}\sone{n}{n-j}
\stwo{n-j}{k}\sone{k}{k-i}(p-1)^i(q-1)^jx^k. \label{eq100}
\end{align}
Binomial expansion of $(p-1)^i$ and $(q-1)^j$ leads to
\begin{align*}
T_n&(x;p,q)\nn\\
&=\sum_{k=0}^n\sum_{j=0}^{n-k}\sum_{i=0}^{k-1}\sum_{m=0}^i\sum_{\ell=0}^j\binom{i}{m}\binom{j}{l}(-1)^{(i-m)+(j-l)}\sone{n}{n-j}
\stwo{n-j}{k}\sone{k}{k-i}p^mq^\ell x^k.
\end{align*}
By applying the identity $\stir(i,j)=(-1)^{i-j}\sonet{i}{j}$ and changing the summation's order, we obtain
\begin{align*}
T_n&(x;p,q)\nn\\
&=\sum_{k=0}^n\sum_{m=0}^{k-1}\sum_{\ell=0}^{n-k}\sum_{i=m}^{k-1}\sum_{j=\ell}^{n-k}\binom{i}{m}\binom{j}{l}(-1)^{m+\ell}\stir(n,n-j)
\Stir(n-j,k)\stir(k,k-i)p^mq^\ell x^k,
\end{align*}
which proves the theorem.
\end{proof}
\begin{remark}
Note that for all $n\in\mathbb{N}$ it holds that $[x^0]\tpq{n}=0$ and $\tpq{0}=1$ for all values of $p$ and $q$.
\end{remark}

\begin{remark}
We already mentioned that the cases when $p,q\in\{1,2\}$ are related to combinatorial objects -- partitions of a set. However, the result of Theorem~\ref{th5} is significantly wider. It actually gives an explicit expression for the coefficients of expansion into Taylor series for a variety of functions defined for different values of the parameters $p$, $q$, and $x$.
\end{remark}

\section{Another connection to permutations}
We already show that statistic $\nse$, defined originally for set partitions, can be naturally  translated into permutations. Now we present a natural connection between this statistic and another statistic on permutations, namely, right-to-left minima. An element $\sigma_j$ of permutation $\sigma=\sigma_1\sigma_2\ldots\sigma_n$ is a right-to-left minimum of $\sigma$ if $\sigma_j<\sigma_k$ for all $k>j$. We can state the following proposition.
\begin{prop}
Given a permutation $\sigma\in S_n$. Let $\RLM(\sigma)$ be a set of the right-to-left minima of $\sigma$, $\NSE(\sigma)$ be a set of the elements that might be moved w.r.t. the statistic $\nse$, and $\nse(\sigma)=|\NSE(\sigma)|$, $\rlm(\sigma)=|\RLM(\sigma)|$. It holds that
\begin{align*}
&\NSE(\sigma)\cap\RLM(\sigma)=\emptyset,
&\nse(\sigma)+\rlm(\sigma)=n.
\end{align*}
\end{prop}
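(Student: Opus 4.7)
The plan is to reduce both assertions to the single set equality
\[
\NSE(\sigma)=\bigl\{\,v\in[n]\,:\,v\text{ is not a right-to-left minimum of }\sigma\,\bigr\},
\]
from which the conclusions are immediate since $\RLM(\sigma)$ and its complement in $[n]$ form a partition of the $n$-element ground set. I would prove this equality via two set containments, working throughout from the operational content of $\NSE(\sigma)$ as the (unique) set of elements that must be moved to the right in a minimum-length sequence of rightward moves that transforms $\sigma$ into the identity permutation.

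For the first containment, that every non-right-to-left minimum belongs to $\NSE(\sigma)$, I would fix a value $v=\sigma_j$ that is not a right-to-left minimum and pick $k>j$ with $\sigma_k<v$. In $\sigma$ the value $v$ precedes $\sigma_k$, whereas in the identity $v$ comes after $\sigma_k$. Assuming toward contradiction that $v$ is never moved in some sorting procedure, I would verify by induction on the number of moves applied that $v$ still precedes $\sigma_k$ in the current sequence: a rightward move of $\sigma_k$ can only push it further past $v$, and a rightward move of a third element $e$ shifts the positions of $v$ and $\sigma_k$ only in ways consistent with this relative order. Since the procedure eventually produces the identity, in which $\sigma_k$ precedes $v$, we obtain a contradiction, so $v$ must be moved.

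For the second containment, that $\NSE(\sigma)$ contains no right-to-left minimum, I would exhibit an explicit sorting that moves only non-right-to-left minima. Process the positions $j=n,n-1,\ldots,1$ from right to left; if $\sigma_j$ is a right-to-left minimum of the original $\sigma$, leave it untouched, otherwise move $\sigma_j$ rightward into the unique slot consistent with its value among the already-sorted elements to its right. Because the right-to-left minima of $\sigma$ form an increasing subsequence in both position and value, they act as a stable skeleton of the identity, and the non-right-to-left minima can be threaded into their correct slots around this skeleton using one rightward move each. This algorithm uses exactly $n-\rlm(\sigma)$ rightward moves, all applied to non-right-to-left minima, so $|\NSE(\sigma)|\le n-\rlm(\sigma)$ and $\NSE(\sigma)\cap\RLM(\sigma)=\emptyset$; combined with the first containment, this yields the desired equality.

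The main obstacle is the invariance argument in the first containment, which requires a careful case split on the positions of the moved element $e$ and its insertion target relative to those of $v$ and $\sigma_k$ to guarantee that no single rightward move of an element other than $v$ can swap the relative order of $v$ and $\sigma_k$. Once this invariance is in hand, the contradiction is immediate, and the explicit sort in the second containment is routine to verify by induction on $n-j$.
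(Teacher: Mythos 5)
Your proof is correct, but it is organized differently from the paper's. The paper's proof starts from the closed-form description $\nse(\sigma)=|\{j:\sigma_j>\sigma_k\text{ for some }k>j\}|$ and then builds the partition of the elements greedily: repeatedly take the minimum of the remaining suffix, place it in $\RLM(\sigma)$, and place the elements preceding it in $\NSE(\sigma)$; the proposition follows because the successive suffix minima are exactly the right-to-left minima. You instead work from the operational definition of $\nse$ as the minimum number of rightward moves needed to reach the identity, and prove the set identity $\NSE(\sigma)=[n]\setminus\RLM(\sigma)$ by two containments: a lower bound (the relative order of an untouched element $v$ and a smaller element to its right is invariant under rightward moves of other elements, so every non-right-to-left minimum must be moved in \emph{any} sorting) and an upper bound (an explicit right-to-left insertion that moves each non-right-to-left minimum exactly once and never touches the increasing skeleton formed by the right-to-left minima). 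What your version buys is a justification the paper essentially takes for granted, namely that no sorting can avoid moving a non-right-to-left minimum, so the minimal moved set is well defined and equals the complement of $\RLM(\sigma)$; the paper's greedy description is shorter but implicitly assumes this characterization of $\nse$ on permutations. Both arguments rest on the same combinatorial fact that the right-to-left minima form the increasing subsequence that can stay fixed, and your explicit algorithm is the same skeleton idea made constructive, so the two proofs are compatible; yours is simply the more self-contained (two-sided) version.
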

\begin{proof}
Given a permutation $\sigma=\sigma_1\sigma_2\ldots\sigma_n\in S_n$, then a statistic $\nse(\sigma)=|\{j:\sigma_j>\sigma_k\text{ for some } k>j\}|$. Let us sort the elements of $\sigma$ to two subsets: $\NSE(\sigma)$, containing all elements of $\sigma$ that must be moved to the right w.r.t. the statistic $\nse$, and $\RLM(\sigma)$, containing all elements that must be left in their relative to each other order. Now we are looking for the minimal element of $\sigma$. Let us assume that it is the element $\sigma_{j_1}$. Therefore the elements $\sigma_1,\ldots,\sigma_{j_1-1}$ go to the $\NSE(\sigma)$ and the element $\sigma_{j_1}$  goes to $\RLM(\sigma)$. Now we consider the rest of the permutation $\sigma$, namely, the sequence $\sigma_{j_1+1}\ldots\sigma_n$. Let us assume that the element $\sigma_{j_2}$ is its minimum. Therefore the elements $\sigma_{j_1+1},\ldots,\sigma_{j_2-1}$ go to the $\NSE(\sigma)$ and the element $\sigma_{j_2}$  goes to $\RLM(\sigma)$. We continue this process till all the elements of $\sigma$ are sorted. At the end of sorting each element of $\sigma$ belongs to only one of two subsets.
It follows from the sorting process that the subset $\RLM(\sigma)$ is the set of right-to-left minima of the permutation $\sigma$. 
Obviously, $|\NSE(\sigma)|=\nse(\sigma)$, and, with notation $\rlm(\sigma)=|\RLM(\sigma)|$, the proof is complete.
\end{proof}
\begin{remark}
There are three other related terms for permutations: left-to-right maxima, right-to-left maxima, and left-to-right minima, whose distributions are connected to each other. It is known that the number of permutations $\sigma\in S_n$ with exactly $k$ left-to-right maxima is $\sonet{n}{k}$ (see \cite[Corollary~3.42]{Bona2012}), which equals to the number of permutations of $\sigma\in S_n$ such that $\nse(\sigma)=n-k$.
\end{remark}

\section{Connection to the (reverse) Bessel polynomials}
In \cite{Herscovici2015}, by applying techniques of the umbral calculus, the classical Touchard polynomials were expressed in terms of the Bessel polynomials $p_n(x)$ considered by Carlitz in 1957 \cite{Carlitz1957} (see sequence A001497 at OEIS \cite{OEIS}). Those polynomials are defined by the following exponential generating function.
\begin{align}
e^{x\left(1-\sqrt{1-2t}\right)}=\sum_{n=0}^\infty p_n(x)\frac{t^n}{n!}.\label{bes1}
\end{align}
It worths to mention that the Bessel polynomials $p_n(x)$ are closely related to so called the reverse Bessel polynomials $\theta_n(x)$ (see \cite{Berg2006} and references therein). More precisely, one have
\begin{align*}
p_n(x)=x\theta_{n-1}(x).
\end{align*}
Let us consider now another special case of the $p,q$-Touchard polynomials, namely, the case $p=1$, $q=-1$. In this case, the generating function \eqref{Eq1-tpq} has the following form.
\begin{align}
e^{x\left(\sqrt{1+2t}-1\right)}=\sum_{n=0}^\infty\tou_n(x;1,-1)\frac{t^n}{n!}.
\label{bes2}
\end{align}

Comparing \eqref{bes1} and \eqref{bes2} we can easily obtain the following connection between the Bessel polynomials $p_n(x)$ and the $p,q$-Touchard polynomials.
\begin{theorem} For any nonnegative integer $n$, the Bessel polynomials $p_n(x)$ defined by \eqref{bes1} and the $p,q$-deformed Touchard polynomials defined by \eqref{Eq1-tpq} satisfy
\begin{align}
(-1)^np_n(-x)=\tou_n(x;1,-1),\label{bes3}
\end{align}
or, explicitly,
\begin{align}
(-1)^np_n(-x)=\sum_{k=0}^n\sum_{j=0}^{n-k}2^js(n,n-j)S(n-j,k)x^k.
\label{bes4}
\end{align}
\end{theorem}
\begin{proof}
The proof of \eqref{bes3} follows immediately from \eqref{bes1}-\eqref{bes2}.  To prove \eqref{bes4}, let us consider \eqref{eq100} with parameters $p=1$, $q=-1$, so we get
\begin{align*}
\tou_n(x;1,-1)&=\sum_{k=0}^n\sum_{j=0}^{n-k}(-1)^j\sone{n}{n-j}
\stwo{n-j}{k}2^jx^k\nn\\
&=\sum_{k=0}^n\sum_{j=0}^{n-k}2^js(n,n-j)S(n-j,k)x^k, 
\end{align*}
which completes the proof.
\end{proof}

\vspace{0.5cm}

\textbf{Acknowledgement}.
This research was supported by the Israel Science Foundation (grant No. 1692/17). The author thanks Ross G. Pinsky and Robert S. Maier for  comments and useful discussions.

\end{document}